\newtheorem{thm}{Theorem}[section]
\newtheorem{lemma}[thm]{Lemma}
\newtheorem{prop}[thm]{Proposition}
\newtheorem{cor}[thm]{Corollary}
\newtheorem{question}[thm]{Question}
\newtheorem{fact}[thm]{Fact}
\newtheorem*{lemma*}{Lemma}
\theoremstyle{definition}
\newtheorem{nrmk}[thm]{Remark}
\newtheorem*{rmk}{Remark}
\theoremstyle{remark}
\def\dotminussym#1#2{%
  \setbox0=\hbox{$\m@th#1-$}%
  \kern.5\wd0%
  \hbox to 0pt{\hss\hbox{$\m@th#1-$}\hss}%
  \raise.6\ht0\hbox to 0pt{\hss$\m@th#1.$\hss}%
  \kern.5\wd0}
\newcommand{\dotminus}{\mathbin{\mathpalette\dotminussym{}}}
\renewcommand{\r}{\mathbb{R}}
\newcommand{\Z}{\mathbb{Z}}
\newcommand{\curly}[1]{\mathcal{#1}}
\newcommand{\n}{\mathbb{N}}
\newcommand{\la}{\curly{L}}
\renewcommand{\to}{\rightarrow}
\def \bd{\operatorname{bd}}
\def \<{\langle}
\def \>{\rangle}
\def \*Z {{{^*}\Z}}
\def \((  {(\!(}
\def \)) {)\!)}
\numberwithin{equation}{section}
\def \Th{\operatorname{Th}}
\def \u{\mathcal U}
\def \conn{\operatorname{conn}}
\title{The pseudoarc is a co-existentially closed continuum}
\author{Christopher J. Eagle \and Isaac Goldbring \and Alessandro Vignati}
\thanks{Goldbring's work was partially supported by NSF CAREER grant DMS-1349399.}
\thanks{Eagle's work was partially supported by an Ontario Graduate Scholarship award.}
\address{Department of Mathematics, University of Toronto, 40 St. George Street, Toronto, Ontario, Canada, M5S 2E4}
\curraddr{Department of Mathematics and Statistics, University of Victoria, PO BOX 1700 STN CSC, Victoria, British Columbia, Canada, V8W 2Y2}
\email{eaglec@uvic.ca}
\urladdr{http://www.math.uvic.ca/~eaglec/}
\address {Department of Mathematics, Statistics, and Computer Science, University of Illinois at Chicago, Science and Engineering Offices M/C 249, 851 S. Morgan St., Chicago, IL, 60607-7045}
\email{isaac@math.uic.edu}
\urladdr{http://www.math.uic.edu/~isaac}
\address{Department of Mathematics and Statistics, York University, 4700 Keele Street, Toronto, Ontario, Canada, M3J 1P3}
\email{ale.vignati@gmail.com}
\urladdr{http://www.automorph.net/avignati}
\subjclass[2010]{54C35, 54E45, 03C65, 03C25, 46L05}%
\keywords{pseudoarc, existentially closed, continuum, quantifier elimination, continuous logic, elementary equivalence}%
\begin{document}

\begin{abstract}
Answering a question of P. Bankston, we show that the pseudoarc is a co-existentially closed continuum.  We also show that $C(X)$, for $X$ a nondegenerate continuum, can never have quantifier elimination, answering a question of the the first and third named authors and Farah and Kirchberg.
\end{abstract}

\maketitle

\section{Introduction}

A \emph{compactum} is simply a compact Hausdorff space and a \emph{continuum} is a connected compactum.  There has been an extensive study of compacta and continua from the model-theoretic perspective; see, for example, \cite{Steeg2003} or \cite{bankston4}. In \cite{bankston1}, Bankston dualizes the model-theoretic notions of existential embeddings and existentially closed structures to the categories of compacta and continua; the dual notions are (appropriately named) \emph{co-existential mappings} and \emph{co-existentially closed compacta and continua}.  In the appendix to this note, we show how these notions translate to their usual model-theoretic counterparts in the continuous signature for C$^*$-algebras (e.g., $X$ is a co-existentially closed compactum if and only if $C(X)$ is an existentially closed abelian C$^*$-algebra).

Recall that a continuum $X$ is said to be \emph{indecomposable} if $X$ is not the union of two of its proper subcontinua.  If, in addition, every subcontinuum of $X$ is also indecomposable, then $X$ is said to be \emph{hereditarily indecomposable}.  In \cite{bankston3}, Bankston proves that every co-existentially closed continuum is hereditarily indecomposable.

Amongst the hereditarily indecomposable continua, there is one such continuum that plays a special role.  Recall that a continuum $X$ is said to be \emph{chainable} if, for every finite open cover $U_1,\ldots,U_n$ of $X$, there is a refinement to a cover $V_1,\ldots,V_m$ such that $V_i\cap V_j=\emptyset$ if and only if $|i-j|>1$.  Up to homeomorphism, there is a unique continuum that is both chainable and hereditarily indecomposable; this continuum is called the \emph{pseudoarc} (see \cite{Logan}).  In \cite{bankston2}, Bankston asks the natural question:  is the pseudoarc a co-existentially closed continuum?  In this note, we answer Bankston's question in the affirmative.

The plan of the proof is as follows:  motivated by an $\la_{\omega_1,\omega}$ characterization of chainable continua in the (discrete) signature of lattice bases for continua given by Bankston in \cite{bankston5}, we prove that the class of separable $C(X)$ for which $X$ is a chainable continuum is definable by a uniform sequence of universal types (in the terminology of \cite{FHL}) in the continuous signature for C$^*$-algebras.  Together with the fact, proven by K.P. Hart in \cite{hart}, that there is a unique universal theory of $C(X)$ for $X$ a nondegenerate continuum, this allows us to apply the technique of model-theoretic forcing to obtain a metrizable continuum $X$ for which $C(X)$ is existentially closed (so $X$ is hereditarily indecomposable) and for which $X$ is chainable, whence $X$ must be the pseudoarc.

We end this note by observing that $C(X)$, for $X$ a nondegenerate continuum, can never have quantifier elimination.  The proof relies on combining the aforementioned result of Hart together with the observation that the class of $C(X)$, for $X$ a continuum, does not have the amalgamation property, as pointed out to us by Logan Hoehn.  Together with results from \cite{EFKV}, the question of which abelian C$^*$-algebras have quantifier elimination is now completely settled:  the only abelian C$^*$-algebras with quantifier elimination are $\mathbb{C}$, $\mathbb{C}^2$, and $C(2^\n)$.  In \cite{EFKV} it was shown that the only non-abelian C$^*$-algebra with quantifier elimination in $M_2(\mathbb{C})$, so we now have the complete list of C$^*$-algebras with quantifier elimination.

In this paper, we assume the reader is familiar with basic model theory as it applies to C$^*$-algebras.  A good reference for the unacquainted reader is \cite{FHS2}.  For background on the notions appearing in Section 3 (e.g. model companions, model-completions, etc.) we refer the reader to \cite{usvy}.

\subsection{Facts from the model theory of continua}

In this paper, all C$^*$-algebras are unital and we always work in the (continuous) signature $\la$ for (unital) C$^*$-algebras. In this language, the class of abelian C$^*$-algebras is clearly universally axiomatizable.

Throughout this paper, we will apply a topological adjective to an abelian C$^*$-algebra if its Gelfand spectrum possesses that property.  Thus, for example, we will call $C(X)$ connected if $X$ is connected (and thus a continuum).

\begin{fact}
The class of connected abelian C$^*$-algebras is universally axiomatizable.
\end{fact}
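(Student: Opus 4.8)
The plan is to exhibit an explicit universal axiom. By Gelfand duality a unital abelian C$^*$-algebra is of the form $C(X)$ for a compactum $X$, and $X$ is connected precisely when $C(X)$ has no idempotents other than $0$ and $1$; since in an abelian C$^*$-algebra every idempotent is automatically a (self-adjoint) projection (if $e^2=e$ then $e$ is $\{0,1\}$-valued, hence real), this is the same as saying the only projections in $C(X)$ are $0$ and $1$, which in turn says the only clopen subsets of $X$ are $\emptyset$ and $X$. The task is therefore to capture ``no nontrivial projections'' by a single $\la$-sentence of the form $\sup_x \varphi(x)=0$. The naive formulation ``$x=x^2 \Rightarrow x\in\{0,1\}$'' is an implication and does not translate directly into a sup-condition; the point that makes it work is the spectral rigidity of near-idempotents.

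Concretely, I would first prove a uniform spectral-gap estimate valid in every unital C$^*$-algebra: there is a continuous $\delta\colon [0,\infty)\to[0,\infty)$ with $\delta(0)=0$ such that every self-adjoint contraction $x$ lies within $\delta(\|x^2-x\|)$ of an honest projection. This is pure functional calculus: if $\|x^2-x\|=\e<1/4$ then $\operatorname{sp}(x)\subseteq\{t:|t^2-t|\le\e\}$, a set that splits into a cluster of radius $O(\e)$ about $0$ and one about $1$, and $p:=\chi_{(1/2,\infty)}(x)$ is then a projection with $\|x-p\|=O(\e)$. Crucially the constant depends only on $\e$, not on the algebra, so $\delta$ is uniform across all $C(X)$.

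With $\delta$ in hand, set $\varphi(x):=\min(\|x\|,\|1-x\|)\dotminus \delta(\|x^2-x\|)$, where $x$ ranges over self-adjoint contractions (a restriction one relativizes in the usual way, e.g.\ by quantifying over $\tfrac12(y+y^*)$), and take as axiom $\sup_x\varphi(x)=0$. This is a legitimate universal $\la$-sentence: $\|x^2-x\|$ and $\min(\|x\|,\|1-x\|)$ are quantifier-free formulas, $\delta$ is an admissible continuous connective, and a single $\sup$ makes it universal. If $X$ is disconnected, a nontrivial clopen set yields a projection $p\neq 0,1$ with $\varphi(p)=\min(1,1)\dotminus\delta(0)=1$, so the axiom fails. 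Conversely, if $X$ is connected then the only projections are $0,1$, and for any self-adjoint contraction $x$ the estimate gives $p\in\{0,1\}$ with $\min(\|x\|,\|1-x\|)\le\|x-p\|\le\delta(\|x^2-x\|)$, so $\varphi(x)=0$; taking the supremum, the axiom holds.

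The main obstacle is exactly the passage from a conditional statement to a sup-condition, which is precisely what the uniform spectral gap supplies; everything else is bookkeeping (the self-adjointness relativization and the verification that $\delta$ is a bona fide connective). I would also record the purely semantic alternative: the class is clearly closed under isomorphism and, by Gelfand duality, under unital subalgebras (a subalgebra $C(Y)$ corresponds to a continuous surjection $X\to Y$, and a continuous image of a connected space is connected) and under ultraproducts (the ultraproduct is $C(Z)$ for the ultracoproduct $Z$, which is a continuum when the factors are, by Bankston); since closure under subalgebras forces closure under ultraroots via $A\hookrightarrow A^{\mathcal{U}}$, the continuous version of the \L o\'s--Tarski theorem then yields universal axiomatizability, albeit without producing explicit axioms.
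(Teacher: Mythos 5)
Your argument is correct, but your primary route differs from the one the paper takes for this Fact. The paper's proof is purely semantic: it verifies closure under ultraproducts (citing Gurevic, or alternatively noting that projections in an ultraproduct lift to projections in the factors) and closure under substructures (a unital inclusion $C(X)\subseteq C(Y)$ dualizes to a continuous surjection $Y\to X$, and continuous images of connected spaces are connected), and then invokes the continuous \L o\'s--Tarski characterization of universal axiomatizability --- exactly the ``purely semantic alternative'' you record in your last paragraph. Your main argument instead produces an explicit universal axiom via the uniform spectral-gap estimate (every self-adjoint contraction $x$ lies within $\delta(\|x^2-x\|)$ of a projection, with $\delta$ uniform across all C$^*$-algebras), and this is sound: for $\|x^2-x\|=\e<1/4$ the spectrum splits into clusters of radius $O(\e)$ about $0$ and $1$, so one may take, say, $\delta(\e)=\min(1,4\e)$, which is a legitimate connective. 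Interestingly, the paper also gives an explicit axiomatization, but only in a follow-up Remark and by a different trick: instead of perturbing a near-idempotent to a projection, it observes that in a connected $C(X)$ any $f$ with $\|f\|=1$ and $\|1-ff^*\|>1/2$ must, by the intermediate value theorem, have $1/2$ in the range of $ff^*$, forcing $\|ff^*-(ff^*)^2\|\geq 1/4$; this yields the single formula $\sup_{\|f\|=1}\min(2\|1-ff^*\|\dotminus 1,\,1\dotminus 4\|ff^*-(ff^*)^2\|)$ with no auxiliary modulus. Your approach buys a quantitative stability statement valid in all C$^*$-algebras (not just abelian ones); the paper's semantic proof is shorter and avoids checking that $\delta$ is an admissible connective, while its IVT-based formula is more self-contained as a concrete axiom. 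Both are complete proofs of the Fact.
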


\begin{proof}
That the class of connected abelian C$^*$-algebras is closed under ultraproducts is a special case of a more general result due to Gurevic (see \cite[Lemma 10]{gur}).  (Alternatively, if $X$ is compact, then $X$ is connected if and only if $C(X)$ is projectionless; it remains to observe that if $(A_i \mid i\in I)$ is a family of C$^*$-algebras,  $\u$ is an ultrafilter on $I$, and $A=\prod_\u A_i$, then any projection of $A$ can be written as $\pi(p_i)$ with each $p_i$ a projection of $A_i$ and $\pi\colon\prod A_i\to\prod_\u A_i$ the usual quotient map.)  It remains to see that the class is closed under substructures:  if $C(X)\subseteq C(Y)$ and $Y$ is connected, then $Y$ continuously surjects onto $X$, whence $X$ is also connected.
\end{proof}
\begin{nrmk}
We may also give an explicit universal axiomatization of models of abelian projectionless C$^*$-algebras: note that a C$^*$-algebra is abelian if and only if $\sup_{x,y}\|xy-yx\|=0$ and that an abelian C$^*$-algebra is projectionless if and only if \[\sup_{\|f\|=1}\min (2\|1-ff^*\|\dotminus 1,1\dotminus4\|ff^*-(ff^*)^2\|)=0,\] where $r\dotminus s:=\max(r-s,0)$.   To see this, first note that if $p$ is a proper projection, then $\|1-p\|=1$ and $\|p-p^2\|=0$.  Conversely, if $X$ is connected and $f\in C(X)$ with $\|f\|=1$ satisfies $2\|1-ff^*\|\dotminus 1>0$, then the minimum of the spectrum of $ff^*$ is less than $1/2$. In particular, $ff^*$ attains the value $1/2$, whence $\|ff^*-(ff^*)^2\|\geq 1/4$ as required.
\end{nrmk}

We let $T_{\conn}$ denote the $\la$-theory of connected abelian C$^*$-algebras.  The following important fact about $T_{\conn}$ is due to K. P. Hart.  We indicate in the appendix how the version we state here follows from the statement given in \cite{hart}.

\begin{fact}\label{kp}
Suppose that $X$ and $Y$ are continua and $X$ is nondegenerate (that is, $X$ is not a single point).  Then there is $C(X')\equiv C(X)$ for which $C(Y)$ embeds into $C(X')$.
\end{fact}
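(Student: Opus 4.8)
The plan is to derive the statement from Hart's theorem---already recalled above, that all nondegenerate continua give rise to the same universal theory of $C(X)$---by means of the standard model-theoretic criterion for embeddability into a model of a theory. Recall that for an $\la$-theory $T$ and an $\la$-structure $M$, the structure $M$ embeds into some model of $T$ if and only if $M$ satisfies every universal consequence of $T$; in the continuous setting this is the diagram method applied to $T$ together with the atomic diagram of $M$. I would apply this with $T := \Th(C(X))$ and $M := C(Y)$. Since $\Th(C(X))$ is complete, its models are exactly the C$^*$-algebras elementarily equivalent to $C(X)$; and because both abelianness and connectedness are universally axiomatizable (the latter by the first Fact above), every such model is of the form $C(X')$ for a continuum $X'$ with $C(X')\equiv C(X)$. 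Writing $\Th_\forall(C(X))$ for the set of universal consequences of $\Th(C(X))$, it therefore suffices to show that $C(Y)\models\Th_\forall(C(X))$.

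First I would dispose of the degenerate case. If $Y$ is a single point then $C(Y)\cong\bC$ embeds unitally into every unital C$^*$-algebra, in particular into $C(X)$ itself, so we may simply take $X':=X$ and there is nothing further to prove.

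Now suppose $Y$ is also nondegenerate. Here Hart's theorem applies to both $C(X)$ and $C(Y)$ and yields $\Th_\forall(C(X))=\Th_\forall(C(Y))$. Since $C(Y)$ trivially satisfies its own universal theory, we obtain $C(Y)\models\Th_\forall(C(Y))=\Th_\forall(C(X))$, which is precisely the hypothesis required by the embeddability criterion. The criterion then produces a model $C(X')\equiv C(X)$ into which $C(Y)$ embeds, as desired.

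All of the genuine mathematical content sits inside Hart's uniqueness of the universal theory, which I am taking as given; the remaining steps are bookkeeping, so I do not expect a serious obstacle here. The nondegeneracy hypothesis on $X$ is used exactly at the point where Hart's theorem is invoked: for $X$ a single point, $\Th_\forall(\bC)$ is far too rigid to admit a nondegenerate $C(Y)$ into any of its models, so the statement genuinely fails there. The only points demanding care are confirming, via Gelfand duality together with the universal axiomatizability of the abelian connected C$^*$-algebras, that the models of $\Th(C(X))$ are literally the $C(X')$, and checking that the embeddability criterion is available in continuous logic, where it follows from the continuous-logic diagram argument.
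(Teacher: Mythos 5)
Your derivation is logically sound, but it inverts the paper's order of deduction, and it is worth being clear about where that leaves the actual content. The paper does not prove Fact \ref{kp} from the uniqueness of the universal theory; it states Fact \ref{kp} as Hart's theorem itself (Proposition 3.1 of \cite{hart}, phrased dually: every continuum is a continuous image of some continuum co-elementarily equivalent to the given nondegenerate $X$) and the only ``proof'' supplied is the translation carried out in the appendix, via Bankston's result that elementary equivalence of lattice bases for the closed sets implies co-elementary equivalence of the compacta, together with the identification of ultracoproducts of compacta with ultraproducts of the corresponding C$^*$-algebras. The statement that any two $C(X)$'s for nondegenerate $X$ have the same universal theory appears in the paper as a \emph{consequence} of Fact \ref{kp} (the ``In particular'' immediately following it), so taking it as your starting point and running the standard criterion ($M$ embeds into a model of $T$ iff $M\models T_\forall$) in reverse really establishes the equivalence of the two formulations rather than the Fact itself; the mathematical content still sits in Hart's paper, where it is proved in the embedding/surjection form, not in the form you assume. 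That said, your bookkeeping is all correct: the continuous-logic diagram argument for the embeddability criterion, the use of completeness of $\Th(C(X))$, the observation that universal axiomatizability of abelianness and connectedness together with Gelfand duality forces every model of $\Th(C(X))$ to be some $C(X')$ with $X'$ a continuum, and the separate (trivial) treatment of degenerate $Y$ via $\bC\hookrightarrow C(X)$. So if you are allowed to quote Hart in the ``unique universal theory'' form --- as the introduction of the paper loosely does --- your argument is a clean alternative to the appendix's lattice-theoretic translation; if not, you have displaced rather than discharged the burden, and you would need to return to Hart's actual statement to close the loop.
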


In particular, any two $C(X)$'s for $X$ a nondegenerate continuum have the same universal theory.  As an aside, a standard ``sandwiching'' argument shows that if $X$ and $Y$ are nondegenerate continua for which both $\Th(C(X))$ and $\Th(C(Y))$ are $\forall\exists$-axiomatizable, then $\Th(C(X))=\Th(C(Y))$.

As another aside, one can use Fact \ref{kp} to prove that every complete theory of nondegenerate continua has continuum many nonisomorphic separable models.  Indeed, first recall \cite[Section 20]{MT} that there is a family $(X_\alpha)_{\alpha<2^{\aleph_0}}$ of nondegenerate metrizable continua such that, for any metrizable continuum $Y$, $Y$ maps onto at most countably many of the $X_\alpha$'s.  Now given a nondegenerate metrizable continuum $Y$ and $\alpha<2^{\aleph_0}$, by Fact \ref{kp} one can find a metrizable continua $Y_\alpha$ such that $C(X_\alpha)\hookrightarrow C(Y_\alpha)$ and $C(Y_\alpha)\equiv C(Y)$.  It remains to observe that continuum many of the $Y_\alpha$'s must be pairwise non-homeomorphic.

\section{Proof of the Main Theorem}

Following the terminology of \cite{FHL} (see also \cite{FarahMagidor}), we say that a class $\mathcal{K}$ of separable models of $T_{\conn}$ is \emph{definable by a uniform sequence of universal types} if there are \emph{existential} $\la$-formulae $(\varphi_{m,n}(\vec x_m) \mid m,n\in \n)$ (taking only nonnegative values) such that a model $C(X)$ of $T_{\conn}$ belongs to $\mathcal{K}$ if and only if, for all $m\in \n$, we have
\[
\left(\sup_{\vec x_m}\inf_n \varphi_{m,n}(\vec x_m)\right)^{C(X)}=0.
\]
\begin{thm}\label{omitting}
The class of models $C(X)$ of $T_{\conn}$ with $X$ chainable is uniformly definable by a sequence of universal types.
\end{thm}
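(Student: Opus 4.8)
The plan is to translate Bankston's combinatorial definition of chainability into a first-order (continuous) scheme over $C(X)$. Recall $X$ is chainable if every finite open cover admits a refinement to a \emph{chain} cover $V_1,\dots,V_m$ (where $V_i\cap V_j=\emptyset$ iff $|i-j|>1$). The first task is to re-express ``chainable'' purely in terms of the lattice/algebraic structure of $C(X)$, so that it becomes a statement about the existence of tuples of functions satisfying certain quantifier-free conditions. Following Bankston's $\la_{\omega_1,\omega}$ characterization in \cite{bankston5}, I would encode an open cover by a finite tuple of positive-valued functions $f_1,\dots,f_n \in C(X)$ whose ``supports'' (roughly, the sets $\{f_i > 0\}$) cover $X$, and encode a chain refinement by another such tuple $g_1,\dots,g_m$ together with inequalities expressing that each $\{g_j>0\}$ sits inside some $\{f_i>0\}$ (refinement) and that $\{g_j>0\}\cap\{g_k>0\}=\emptyset$ whenever $|j-k|>1$ (the chain condition). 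The key point is that all of these set-theoretic relations among cozero sets can be written using sups/infs of $C(X)$-norms of algebraic combinations of the functions, since in an abelian C$^*$-algebra $\|fg\|=0$ detects disjointness of supports and norm inequalities detect containment.

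The second step is to package this into the precise form required by uniform definability by a sequence of universal types. For each $m$ I would let the variable tuple $\vec x_m$ range over candidate covers of size controlled by $m$ (encoding the ``for every finite open cover'' universal quantifier), and for each $n$ I would let $\varphi_{m,n}(\vec x_m)$ be an existential formula asserting, with precision $1/n$, that there exists a refining chain cover compatible with $\vec x_m$. The existential quantifier inside $\varphi_{m,n}$ ranges over the refinement tuple $g_1,\dots,g_{k}$; the inner $\inf_n$ then says such a refinement exists to arbitrary precision. Concretely, chainability becomes
\[
\left(\sup_{\vec x_m}\inf_n \varphi_{m,n}(\vec x_m)\right)^{C(X)}=0 \quad \text{for all } m,
\]
which is exactly the required shape, with each $\varphi_{m,n}$ existential and nonnegative. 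The bookkeeping here is to ensure that ranging over all $m$ genuinely quantifies over \emph{all} finite open covers (up to the approximations continuous logic forces on us) and that the $\inf_n$ correctly captures approximate existence of a chain refinement.

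I expect the main obstacle to be the passage between the genuinely \emph{open} covers of the topological definition and the \emph{cozero} (functional) covers available in $C(X)$, carried out uniformly and with the correct quantifier alternation. Three technical issues arise. First, ``there is a refinement to a chain cover'' is an existential statement about open sets, but in continuous logic I can only quantify over functions and only detect open conditions approximately; I must verify that approximate chain refinements by cozero sets exist if and only if honest chain refinements by open sets exist, which should follow from compactness and normality of $X$ together with partition-of-unity / Urysohn arguments. Second, I must confirm that the refinement and disjointness relations really are captured by \emph{existential} formulae (the $g_j$ witnessing the refinement are existentially quantified, while the conditions they satisfy are quantifier-free norm conditions), so that $\varphi_{m,n}$ lands in the correct syntactic class. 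Third, I must check uniformity: a single sequence $(\varphi_{m,n})$ must work for all models $C(X)$ of $T_{\conn}$ simultaneously, which requires that the bounds relating cover size, refinement size, and approximation precision depend only on $m$ and $n$ and not on $X$. Once these approximation lemmas are in place, the equivalence of the displayed condition with chainability is a matter of unwinding definitions.
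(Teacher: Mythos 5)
Your overall strategy coincides with the paper's: encode covers by tuples of nonnegative functions, detect covering via positivity of $\sum|f_i|$, disjointness via $\|g_ig_j\|$, and refinement via a quantifier-free domination condition, then package chainability as a $\sup\inf$ of existential formulae. But as written the proposal defers exactly the steps where the content lies, and at least two of them hide genuine difficulties rather than routine ``unwinding.'' The first concerns your statement that you will ``let the variable tuple $\vec x_m$ range over candidate covers.'' In continuous logic the outer $\sup$ ranges over \emph{all} tuples, and ``the cozero sets of $f_1,\dots,f_k$ cover $X$'' is not a condition your formulae can see --- only the quantitative statement $\inf_X\sum|f_i|\geq r$ is. So your $\varphi_{m,n}$ must (a) evaluate to nearly $0$ on tuples that fail to cover, and (b) in the chainable direction, be witnessed by a refinement whose own covering constant $\inf_X\sum|g_j|$ is bounded below in terms of $\inf_X\sum|f_i|$, with constants depending only on the lengths of the tuples. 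The paper builds both into the formula, wrapping it in $\min(\psi_0^k(\vec x),\,\cdot\,)$ and using the relativized term $\psi_0^k(\vec x)\dotminus k\psi_0^m(\vec y)$; without some such device the uniformity you flag as your third issue genuinely fails.

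The second, more serious, gap is a tension between the two conditions you impose on the refinement tuple: disjointness ($\|g_ig_j\|$ small for $|i-j|\geq 2$) and a uniform positive lower bound on $\sum g_j$. A continuous $g_j$ supported inside the open link $W_j$ is necessarily small near the boundary of $W_j$, so the naive choice of functions supported on the links does not obviously yield a covering constant controlled by that of $(f_i)$; conversely, functions bounded below by $\epsilon$ on all of $\overline{W_j}$ cannot vanish off $W_j$ and so need not satisfy the disjointness condition. The paper resolves this by enlarging the links to open sets $N_j$ (still pairwise disjoint for $|i-j|\geq 2$) containing closed sets $M_j$ that cover $X$, and multiplying the $g_j$ by cutoffs supported on the $N_j$; some construction of this kind (or a careful truncation-and-rescaling argument) is required, and ``partition-of-unity / Urysohn arguments'' does not yet supply it. A smaller omission: a chain requires $V_i\cap V_j=\emptyset$ \emph{if and only if} $|i-j|>1$, whereas your formulae enforce only the ``if'' direction; in the backward implication you must still argue --- the paper uses connectedness of $X$ to discard redundant links --- that consecutive links of the extracted refinement actually meet.
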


\begin{proof}

Consider the following quantifier-free $\la$-formulae, where $\vec x=(x_1\ldots,x_k)$, $\vec y=(y_1,\ldots,y_m)$, and $\vec z=(z_{ij})_{1\leq i,j\leq m}$:
\begin{itemize}
\item $\psi_0^k(\vec x)=\|\sum |x_{i}|\|-\|(\|\sum |x_i|\|-\sum |x_i|)\|$
\item\label{c3} $\psi_1^{m}(\vec y):=\max_{|i-j|\geq 2}\sqrt{\|y_iy_j\|}$
\item\label{c4} $\psi_2^{k,m}(\vec x,\vec y,\vec z):=\max_j \min_i d(|x_i|-|y_j|,|z_{ij}|)$
\end{itemize}

Note that if $X$ is a continuum and $\vec f\in C(X)^k$, then $\psi_0^k(\vec f)^{C(X)}\geq 0$ and $\psi_0^k(\vec f)^{C(X)}=r$ if and only if $r$ is maximal such that $\operatorname{range}(\sum |f_i|)\subseteq[r,\infty)$. 


Let $\sigma_{k}(\vec x)$ denote the infinitary formula
\begin{eqnarray*}
\inf_{m}\inf_{\vec g}\inf_{\vec h} \min(\psi_0^{k}(\vec x),\max(\psi_0^k(\vec x)\dotminus k\psi_0^m(\vec y),m\psi_1^{m}(\vec y),m\psi_2^{k,m}(\vec x, \vec y,\vec z)))).
\end{eqnarray*}
\noindent To prove the theorem, we show that a metrizable continuum $X$ is chainable if and only if, for all $k$, we have 
\[
\left(\sup_{\vec x}\sigma_{k}(\vec x)\right)^{C(X)}=0.
\]
Since we will evaluate formulas only in $C(X)$ during this proof, we suppress the superscript $C(X)$ to simplify notation.

We start with the  backward implication.  Suppose that $X$ is a metrizable continuum for which $\sup_{\vec x}\sigma_{k}(\vec x)=0$.  Fix an open cover $U_1,\ldots,U_k$ of $X$. For $i=1,\ldots,k$, fix a nonnegative function $f_i$ in $C(X)$ with $\|f_i\|=1$ for which $U_i=\{x\in X\colon f_i(x)\neq 0\}$. (This is possible for, in a metrizable compact space, every closed set is the zeroset of a continuous function.) Since $(U_i)$ forms a cover of $X$, we have $\psi_0^k(\vec f)>0$.  Fix $\epsilon>0$ for which $\psi_0^k(\vec f)>k\epsilon$. Take $m,\vec g$ and $\vec h$ witnessing that $\sigma_k(\vec f)<\frac{\epsilon}{2}$. Without loss of generality, we may assume that each $g_j$ and $h_{ij}$ are nonnegative functions.

Since $\psi_0^k(\vec f)\dot-k\psi_0^m(\vec g)<\frac{\epsilon}{2}$, we have that $\psi_0^m(\vec g)>\frac{\epsilon}{2}$. For $j=1,\ldots,m$, set 
\[
W_j:=\left\{x\mid g_j(x)> \frac{\epsilon}{2m}\right\}.
\]
\noindent Note that $W_1,\ldots,W_m$ covers $X$.  We next show that $(W_j)$ refines $(U_i)$.  Fix $j$ and take $i$ such that $m\cdot d(f_i-g_j,h_{ij})<\frac{\epsilon}{2}$; we show that for such$i$, $W_j\subseteq U_i$.  Towards this end, fix $x\in W_j$.  Since $h_{ij}$ is nonnegative we have
\[-\frac{\epsilon}{2m} < f_i(x) - g_j(x),\]
and from the definition of $W_j$ we have
\[f_i(x)-g_j(x)<f_i(x)-\frac{\epsilon}{2m};\]
that is, $-\frac{\epsilon}{2m} < f_i(x) - \frac{\epsilon}{2m}$, whence $f_i(x)>0$ and $x\in U_i$.

Next fix $i,j\in \{1,\ldots,m\}$ with $|i-j|\geq 2$.  Suppose, towards a contradiction, that $x\in W_i\cap W_j$. Then $\sqrt{g_ig_j(x)}> \frac{\epsilon}{2m}$, whence $m\psi_1^m(\vec g)^{C(X)}> \frac{\epsilon}{2}$, which is a contradiction.  The only thing preventing from $(W_j)$ from being the desired chain refinement of $(U_i)$ is that we may not have that $W_i\cap W_{i+1}\not=\emptyset$ for all $1\leq i<m$.  However, if $W_i\cap W_{i+1}=\emptyset$ for some $i<m$, then by connectedness of $X$ we have that either $X=\bigcup_{k\leq i}W_k$ or $X=\bigcup_{k\geq i+1}W_k$.  We then pass to the appropriate subsequence of $W_j$, noting that the previous verified conditions of $(W_j)$ persist.  This process must end after a finite number of steps, yielding the desired refinement of $(U_i)$.

We now prove the direct implication.  Suppose that $X$ is chainable and fix $\vec f\in C(X)^k$.  We must show that $\sigma_k(\vec f)=0$.  Without loss of generality, we may assume that each $f_i$ is a nonnegative function of norm $1$. If $0\in \operatorname{range}(\sum_i f_i)$ then $\sigma_k(\vec f)=\psi_0^k(\vec f)=0$.  Thus we may suppose that there is $\delta$ with $\psi^k_0(\vec f)>\delta>0$ and choose $\epsilon',\epsilon>0$ with 
\[
\psi_0^k(\vec f)>k\epsilon'>k\epsilon>\psi_0^k(\vec f)-\delta>0.
\]
 Note that $U_i:=\{x\mid f_i(x)>\epsilon\}$ covers $X$. Let $(W_j)_{j\leq m}$ be a chain that is a refinement for $U_i$ with $m$ minimal.  For $j=1,\ldots,m$, it is routine to construct nonnegative functions $g_j\in C(X)$ with the following properties:
\begin{itemize}
\item[(i)] $g_i\leq f_{i(j)}$ where $i(j)$ is the minimum $i$ such that $W_j\subseteq U_i$;
\item[(ii)] for all $x\in W_j$, $g_j(x)>\epsilon$; 
\item[(iii)] $\|g_j\|<\epsilon'$.
\end{itemize}

Note that (i) implies that $\inf_{\vec z}\psi_2^{k,m}(\vec f,\vec g,\vec z)=0$ and (ii) and (iii) imply that $\psi_0^{m}(\vec g)\in [k\epsilon,k\epsilon']$.  We need to some finagling to arrange that $\psi_1^m(\vec g)=0$.

For $Y\subseteq X$, set $\bd(Y):=\overline Y\setminus Y$.
For $1\leq j\leq m$, note that $\bd(W_j)\subseteq W_{j-1}\cup W_{j+1}$ (with the convention $W_0=W_{m+1}=\emptyset$). Moreover, by the connectedness of $X$, if $j\neq 1,m$, we have \[\bd(W_j)\cap W_{j-1}\neq\emptyset\neq \bd(W_j)\cap W_{j+1}.\]

For $j>1$ and $x\in \bd(W_j)\cap W_{j-1}$, we can find (using, for example, the metrizability of $X$) an open neighborhood $U_x$ of $x$ with the property that, setting 
\[
Z_j:=\bigcup_{x\in \bd(W_j)\cap W_{j-1}}U_x,
\]

\noindent we have
\begin{itemize}
\item $Z_j\subseteq W_{j-1}\cup W_j$;
\item $\overline {Z_j}\cap \overline{W_{j+1}}=\emptyset$;
\item $\overline Z_j\cap \overline Z_{j+1}=\emptyset$.
\end{itemize}

\noindent For convenience, we set $Z_1:=\emptyset$ and $Z_{m+1}:=\emptyset$.

\

\noindent For $j=1,\ldots,m$, we consider the open set
\[
N_j:=Z_j\cup Z_{j+1}\cup (W_j\setminus \overline{\bigcup_{i>j}W_i})
\]

\noindent and its (proper) closed subset

\[
M_j:=\overline W_j\cap (X\setminus \bigcup_{i>j} W_i).
\]

We leave it to the reader to verify that $X=\bigcup_{j=1}^m M_j$ and that $N_i\cap N_j=\emptyset$ for $|i-j|\geq 2$.

For $j=1,\ldots,m$, take nonnegative $g_j'\in C(X)$ with $\|g_j'\|=1$ for which $N_j=\{x\in X \ : \ g_j'(x)\not=0\}$ and for which $g_j'$ is identically $1$ on $M_j$.  Finally, set $g_j'':=g_j\cdot g_j'$.  It remains to establish:

\[
\inf_{\vec h}\,\max(\psi_0^k(\vec f)\dotminus k\psi_0^m(\vec g''),\,m\psi_1^{m}(\vec g''),\,m\psi_2^{k,m}(\vec f, \vec g'',\vec h)) \leq\delta. \quad (\dagger)
\] 

\

First observe that since $m$ was chosen to be minimal, there is $z\in W_1\setminus(\overline Z_2\cup W_2)$.  It follows that $\sum g_j''(z)=g_1(z)<\epsilon'$, whence $\psi_0^m(\vec g'')<\epsilon'$.  To obtain a lower bound on $\psi_0^m(\vec g'')$, fix $x\in X$ and take $j$ such that $x\in M_j$.  Then $x\in \overline{W_j}$ whence $g_j(x)\geq \epsilon$ and $g_j'(x)=1$, so $g_j''(x)\geq \epsilon$.  Since $x\in X$ is arbitrary, we have that $\psi_0^m(\vec g)\geq \epsilon$.  It follows that $\psi_0^k(\vec f)\dotminus k\psi_0^m(\vec g'')<\delta$.  

Next observe that since $N_i\cap N_j=\emptyset$ for $|i-j|\geq 2$, we have that $\psi_1^m(\vec g'')=0$.  Finally, since $g_j''\leq g_j\leq f_{i(j)}$, we have that $$\inf_{\vec h}\psi_2^{k,m}(\vec f,\vec g'',\vec h)=0.$$  It follows that $(\dagger)$ holds and the proof is concluded.

\end{proof}

By a \emph{condition} we mean a finite set of expressions of the form $\varphi(\vec x)<r$ where $\varphi(\vec x)$ is a quantifier-free formula and $r\in \r$.  If $A$ is a C$^*$-algebra and $\vec a$ is a tuple from $A$, we say that $\vec a$ satisfies the condition $p(\vec x)$ if $\varphi(\vec a)^A<r$ for all expressions $\varphi(\vec x)<r$ belonging to $p(\vec x)$.

The following fact can be proven using model-theoretic forcing; see, for example, \cite[Appendix A]{GS}.

\begin{fact}\label{forcing}
Suppose that $\mathcal{K}$ is a class of separable models of $T_{\conn}$ that is uniformly definable by a sequence of universal types as witnessed by the formulae $(\varphi_{m,n}(\vec x_m))$.  Further suppose that, for every $\epsilon>0$, every $m\in \n$ and every satisfiable condition $p(\vec x)$ there is a model $C(X)$ of $T_{\conn}$ and $\vec f\in C(X)$ that satisfies $p(\vec x)$ and for which $\inf_n \varphi_{m,n}(\vec f)^{C(X)}<\epsilon$.  Then there is a separable existentially closed model of $T_{\conn}$ that belongs to the class $\mathcal{K}$.
\end{fact}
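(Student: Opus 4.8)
The plan is to run model-theoretic forcing for C$^*$-algebras in the game-theoretic formalism of \cite[Appendix A]{GS}. In that setup the two players alternately extend a condition, a play of the (countable-length) game produces a separable structure $M_G$ obtained as the limit of the conditions played, and a property $P$ of $M_G$ is called \emph{enforceable} if a player has a strategy forcing $M_G$ to satisfy $P$. The engine of the argument is the conjunction lemma: a countable conjunction of enforceable properties is again enforceable, and an enforceable property is in particular realized by some play. Thus it suffices to show that each of the properties ``$M_G$ is a model of $T_{\conn}$'', ``$M_G$ is existentially closed'', and ``$M_G\in\mathcal{K}$'' is enforceable; the conjunction lemma then yields a single separable $M_G$ witnessing the fact.

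Two of these come for free. Since every condition is realized in a model of $T_{\conn}$ and $T_{\conn}$ is universally axiomatizable, the limit $M_G$ is automatically a (separable) model of $T_{\conn}$. The standard forcing theorem, established exactly as in \cite[Appendix A]{GS}, shows that existential closedness relative to $T_{\conn}$ is enforceable. The entire content of the fact is therefore the enforceability of membership in $\mathcal{K}$.

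For this, fix an enumeration of the countably many names for elements of $M_G$. By the conjunction lemma it is enough to show, for each $m\in\n$, each rational $\epsilon>0$, and each tuple $\vec a$ of names, that it is enforceable that $\inf_n\varphi_{m,n}(\vec a)^{M_G}<\epsilon$; since the names are dense and, by the uniformity of the sequence, the formulas $\varphi_{m,n}$ share a modulus of continuity, this yields $\left(\sup_{\vec x_m}\inf_n\varphi_{m,n}\right)^{M_G}=0$ for every $m$, i.e. $M_G\in\mathcal{K}$. To prove enforceability of $\inf_n\varphi_{m,n}(\vec a)<\epsilon$ one shows that the corresponding set of conditions is dense. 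Writing the existential formula as $\varphi_{m,n}=\inf_{\vec y}\psi_{m,n}(\vec x_m,\vec y)$ with $\psi_{m,n}$ quantifier-free, and given a condition $p$ constraining $\vec a$, the hypothesis supplies a model $C(X)$ of $T_{\conn}$ and a tuple $\vec f$ realizing $p$ with $\inf_n\varphi_{m,n}(\vec f)^{C(X)}<\epsilon$. Fixing such an $n$ and witnesses $\vec g\in C(X)$ with $\psi_{m,n}(\vec f,\vec g)^{C(X)}<\epsilon$, one adjoins fresh names $\vec b$ for $\vec g$ and the quantifier-free constraint $\psi_{m,n}(\vec a,\vec b)<\epsilon$, together with enough of the quantifier-free diagram of $(\vec f,\vec g)$ in $C(X)$ to keep the condition satisfiable. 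The result is an extension $q\supseteq p$, and since $\psi_{m,n}$ is quantifier-free its value is preserved by the later embeddings of the game, so in the limit $\varphi_{m,n}(\vec a)^{M_G}\leq\psi_{m,n}(\vec a,\vec b)^{M_G}<\epsilon$, as required.

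The step I expect to be the main obstacle is this density argument together with the passage from names to arbitrary tuples. One must check carefully that the hypothesis—a statement about arbitrary models of $T_{\conn}$—can be transported into an honest extension of the given forcing condition via the adjunction of witnesses, and that the quantifier-free (hence value-preserving), respectively existential (hence upward-decreasing), character of the formulas involved guarantees the forced inequality persists to $M_G$. Verifying that enforcing the inequality only on the dense set of names suffices to make the full sentence $\sup_{\vec x_m}\inf_n\varphi_{m,n}$ vanish is where the uniformity of the sequence is genuinely used; the remaining bookkeeping, and the standard enforceability of existential closedness, are routine within the formalism of \cite[Appendix A]{GS}.
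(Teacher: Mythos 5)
Your proposal is correct and follows essentially the same route as the paper, which proves this fact only by appeal to the standard model-theoretic forcing machinery of \cite[Appendix A]{GS}: enforceability of membership in $\mathcal{K}$ via a density argument using the hypothesis on conditions, combined with the conjunction lemma and the standard enforceability of existential closedness. You also correctly isolate the one point the citation leaves implicit, namely that the common modulus of uniform continuity of the $\varphi_{m,n}$ is what lets the inequality on the dense set of names propagate to $\sup_{\vec x_m}\inf_n\varphi_{m,n}=0$ in the generic model.
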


\begin{cor}\label{pseudoec}
There is a separable existentially closed model of $T_{\conn}$ that is chainable, which is thus necessarily isomorphic to $C(\mathbb{P})$.
\end{cor}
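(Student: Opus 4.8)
The plan is to apply the model-theoretic forcing machinery of Fact \ref{forcing} to the class $\mathcal{K}$ of chainable models of $T_{\conn}$ and then to identify the resulting continuum via the uniqueness of the pseudoarc. By Theorem \ref{omitting}, $\mathcal{K}$ is uniformly definable by a sequence of universal types, so the first task is to extract explicit witnessing formulae $(\varphi_{k,n}(\vec x))$ from the infinitary formula $\sigma_k$. For fixed $k$ and a fixed chain-length $m$, the expression $\inf_{\vec y}\inf_{\vec z}\min(\psi_0^k(\vec x),\max(\psi_0^k(\vec x)\dotminus k\psi_0^m(\vec y),m\psi_1^m(\vec y),m\psi_2^{k,m}(\vec x,\vec y,\vec z)))$ is an existential, nonnegative $\la$-formula in $\vec x$; enumerating these over $m\in\n$ as $\varphi_{k,n}$ gives $\inf_n\varphi_{k,n}(\vec x)=\sigma_k(\vec x)$, so that $(\sup_{\vec x}\inf_n\varphi_{k,n}(\vec x))=0$ for all $k$ captures exactly chainability.

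The main step is to verify the density hypothesis of Fact \ref{forcing}: given $\epsilon>0$, $k\in\n$, and a satisfiable condition $p(\vec x)$, I must produce a model of $T_{\conn}$ with a tuple realizing $p$ on which $\inf_n\varphi_{k,n}$ is less than $\epsilon$. This is where Fact \ref{kp} enters, through its consequence that all nondegenerate continua $C(Y)$ share one universal, equivalently one existential, theory. Since $p$ is a finite set of strict inequalities $\varphi_i(\vec x)<r_i$ realized by some tuple $\vec a$ in some model $C(Y_0)$, I would fix $\eta>0$ with $\varphi_i(\vec a)<r_i-\eta$ for all $i$, so that the existential sentence $\inf_{\vec x}\max_i(\varphi_i(\vec x)\dotminus(r_i-\eta))$ takes value $0$ in $C(Y_0)$. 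As this existential value is the same in every nondegenerate continuum (a degenerate $Y_0$ being handled by embedding $\mathbb{C}$ into any nondegenerate continuum, which preserves quantifier-free values), it is also $0$ in $C(I)$ for the arc $I=[0,1]$, a chainable nondegenerate continuum; unwinding the infimum yields $\vec f\in C(I)$ with $\varphi_i(\vec f)<r_i$ for all $i$, i.e. realizing $p$. Because $I$ is chainable, $(\sup_{\vec x}\inf_n\varphi_{k,n}(\vec x))^{C(I)}=0$, and in particular $(\inf_n\varphi_{k,n}(\vec f))^{C(I)}=0<\epsilon$, as required.

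Applying Fact \ref{forcing} then produces a separable existentially closed model $C(X)$ of $T_{\conn}$ lying in $\mathcal{K}$, so that $X$ is a metrizable chainable continuum and $C(X)$ is an existentially closed connected abelian C$^*$-algebra. Recalling the correspondence discussed in the introduction and appendix, that $C(X)$ is an existentially closed model of $T_{\conn}$ if and only if $X$ is a co-existentially closed continuum, Bankston's theorem from \cite{bankston3} gives that $X$ is hereditarily indecomposable. A chainable, hereditarily indecomposable continuum is unique up to homeomorphism and equals the pseudoarc \cite{Logan}, so $X\cong\mathbb{P}$ and hence $C(X)\cong C(\mathbb{P})$.

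I expect the main obstacle to be the density verification of the second paragraph: the genuine content is recognizing that the forcing hypothesis reduces to realizing an arbitrary satisfiable quantifier-free condition inside a \emph{chainable} continuum, and that Hart's common-universal-theory result (Fact \ref{kp}) is precisely the tool transporting realizability from an arbitrary model of $T_{\conn}$ into the arc. Some care is also needed in the routine but fiddly reorganization of the infinitary $\sigma_k$ into the family $(\varphi_{k,n})$ demanded by the definition of uniform definability by universal types.
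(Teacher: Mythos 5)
Your proposal is correct and follows essentially the same route as the paper: use Theorem \ref{omitting} to get uniform definability, use Fact \ref{kp} (the common universal/existential theory of nondegenerate continua) to transfer any satisfiable condition into a chainable continuum and thereby verify the density hypothesis of Fact \ref{forcing}, and then identify the resulting existentially closed chainable continuum as $\mathbb{P}$. The only (immaterial) difference is that you realize the condition in $C([0,1])$ whereas the paper realizes it in $C(\mathbb{P})$ itself; your explicit $\eta$-slack handling of the strict inequalities is a slightly more careful version of the paper's transfer step.
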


\begin{proof}
Suppose that $p(\vec x)$ is a condition that is satisfied in $C(X)$ for some continuum $X$.  Using Fact \ref{kp}, we may embed $C(X)$ into $C(Y)$ with $C(Y)\equiv C(\mathbb{P})$, whence it follows that $p(\vec x)$ is satisfied in $C(Y)$ and hence in $C(\mathbb{P})$.  In particular, for any $m,k\geq 1$, we have $\vec f\in C(\mathbb{P})$ such that $\vec f$ satisfies $p(\vec x)$ and $\sigma_k(\vec f)<\frac{1}{m}$.  Thus, we can apply Fact \ref{forcing} to obtain a separable existentially closed model of $T_{\conn}$ that is chainable.
\end{proof}

\begin{nrmk}
It is known that $\mathbb{P}$ is generic in the descriptive set-theoretic sense, that is, in the space of subcontinua of $[0,1]^\n$, the set of those continua homeomorphic to $\mathbb{P}$ is a dense $G_\delta$ set.  One can view Corollary \ref{pseudoec} as the statement that the pseudoarc is also model-theoretically generic as it arises as the generic model constructed using model-theoretic forcing.
\end{nrmk}

\begin{question}
What other properties of (metrizable) continua are uniformly definable by a sequence of universal types?
\end{question}

By the same arguments as above, if (P) is any property of metrizable continua that is uniformly definable by a sequence of universal types, then we can find a separable model $C(X)$ of $T_{\conn}$ that is existentially closed, chainable, and has property (P); since such a $C(X)$ is  necessarily isomorphic to $C(\mathbb{P})$, we conclude that $\mathbb{P}$ has property (P).  This could be a potentially new way of establishing continuum-theoretic properties of the pseudoarc.

\section{Connection to Quantifier Elimination}

In \cite{EFKV}, the following questions are posed:

\begin{question}
Does there exist a model $C(X)$ of $T_{\conn}$ for which $C(X)$ admits quantifier elimination and $C(X)\neq\mathbb C$?  In particular, does $C(\mathbb{P})$ admit quantifier-elimination?
\end{question}

In this section we answer both of these questions in the negative.  We first recall that a theory $T^*$ is a \emph{model companion} of $T_{\conn}$ if $T^*$ is a model-complete theory such that every model of $T_{\conn}$ embeds into a model of $T^*$ and vice-versa; if $T_{\conn}$ has a model companion, then it is necessarily unique (up to logical equivalence).

\begin{lemma}\label{prop:ModelCompanion}
Suppose that $C(X)\models T_{\conn}$, $X$ is nondegenerate, and $C(X)$ is model-complete.  Then $\Th(C(X))$ is the model-companion of $T_{\conn}$.
\end{lemma}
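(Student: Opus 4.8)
The plan is to verify directly that $T^* := \Th(C(X))$ satisfies the three defining properties of a model companion of $T_{\conn}$ recalled just before the statement: that $T^*$ is model-complete, that every model of $T_{\conn}$ embeds into a model of $T^*$, and that every model of $T^*$ embeds into a model of $T_{\conn}$. The first of these is immediate from the hypothesis, since asserting that $C(X)$ is model-complete is precisely asserting that its complete theory $T^*$ is a model-complete theory.

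Two of the three requirements are essentially free. Since $C(X) \models T_{\conn}$, every sentence of $T_{\conn}$ holds in $C(X)$, so $T_{\conn} \subseteq T^*$; consequently every model of $T^*$ is already a model of $T_{\conn}$ and embeds into such a model via the identity. Thus the only substantive point is the remaining embedding condition, and I would spend the bulk of the argument there.

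For that condition, I would take an arbitrary $M \models T_{\conn}$. As $M$ is a unital abelian C$^*$-algebra (abelianness being universally axiomatizable) which is moreover connected, Gelfand duality yields $M \cong C(Y)$ for some continuum $Y$. Since $X$ is nondegenerate by hypothesis, Fact \ref{kp} applies to the pair $X,Y$ and produces $C(X') \equiv C(X)$ --- equivalently $C(X') \models T^*$ --- into which $C(Y) \cong M$ embeds. This settles the last condition, and the uniqueness of the model companion then identifies $T^*$ as \emph{the} model companion of $T_{\conn}$.

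The one point demanding care is that $M$ may be non-separable, so $Y$ need not be metrizable; I would therefore invoke Fact \ref{kp} in the full generality in which it is stated (for arbitrary continua $X,Y$), rather than only for metrizable ones, and observe that the degenerate case $Y = \{\text{pt}\}$, where $M \cong \mathbb{C}$, is subsumed since a single point is itself a continuum. I expect this bookkeeping --- confirming that Fact \ref{kp} genuinely covers every model of $T_{\conn}$, including the non-metrizable and degenerate ones --- to be the only real obstacle; once it is dispatched, the result is a clean unwinding of the definition of model companion.
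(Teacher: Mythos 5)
Your proposal is correct and follows the same route as the paper: the paper's proof likewise observes that the only substantive condition is embedding an arbitrary model of $T_{\conn}$ into a model of $\Th(C(X))$, and disposes of it by invoking Fact \ref{kp}. Your additional bookkeeping about non-metrizable and degenerate $Y$ is sound but not an obstacle, since Fact \ref{kp} is stated for arbitrary continua with only $X$ required to be nondegenerate.
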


\begin{proof}
It suffices to show that if $C(Y)\models T_{\conn}$, then $C(Y)$ embeds into a model of $\Th(C(X))$.  However, this follows immediately from Fact \ref{kp}.
\end{proof}

We recall that if $T^*$ is the model companion of $T_{\conn}$, then $T^*$ is said to be the \emph{model completion} of $T_{\conn}$ if $T^*$ has quantifier-elimination.  (This is not the official definition of model completion, but is convenient for our purposes here.)  

\begin{prop}\label{prop:ModelCompletion}
$T_{\conn}$ does not have a model completion.
\end{prop}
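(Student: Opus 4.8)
The plan is to prove that $T_{\conn}$ does not have a model completion by showing that model completions, when they exist, are characterized by having quantifier elimination, and that quantifier elimination forces a strong amalgamation property that $T_{\conn}$ fails to satisfy. The introduction already signals the key ingredient: as Logan Hoehn pointed out, the class of $C(X)$ for $X$ a continuum does \emph{not} have the amalgamation property. So the overall strategy is to derive a contradiction from the assumption that $T_{\conn}$ has a model completion $T^*$.

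First I would recall the standard model-theoretic fact (in the continuous setting) that if $T^*$ is the model completion of $T_{\conn}$, then $T^*$ has quantifier elimination, and moreover $T_{\conn}$ must itself have the amalgamation property over arbitrary models. More precisely, the existence of a model completion of a universal theory $T$ is equivalent to $T$ having both a model companion and the amalgamation property: if $A, B_1, B_2 \models T_{\conn}$ with embeddings $A \hookrightarrow B_1$ and $A \hookrightarrow B_2$, then quantifier elimination in $T^*$ lets one embed $B_1$ and $B_2$ into a common model of $T^*$ over $A$ (by embedding each $B_i$ into a model of $T^*$ extending it, taking types over $A$, and using that quantifier-free types determine complete types in $T^*$). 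Thus the nonexistence of the model completion will follow once I produce a failure of amalgamation in the class of abelian connected C$^*$-algebras, i.e.\ among the $C(X)$ with $X$ a continuum.

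The heart of the argument is therefore to exhibit three continua $X, Y_1, Y_2$ with continuous surjections $Y_1 \to X$ and $Y_2 \to X$ (equivalently, embeddings $C(X) \hookrightarrow C(Y_1)$ and $C(X) \hookrightarrow C(Y_2)$) that cannot be amalgamated, i.e.\ for which there is no continuum $Z$ admitting surjections onto both $Y_1$ and $Y_2$ compatible with the maps to $X$. Dualizing, amalgamation of the C$^*$-algebras over $C(X)$ corresponds to finding a continuum $Z$ fitting into a pushout-type diagram of surjections; the failure Hoehn identified is that one cannot always complete such a span. The main obstacle I expect is producing (or citing) the explicit continuum-theoretic counterexample: one needs a concrete configuration of continua and maps where a common extension is provably impossible, which is a genuinely geometric/topological fact rather than a formal one. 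I would look for the simplest possible witnesses — plausibly built from arcs or simple graphs where the required coamalgam would force incompatible identifications — and verify the non-amalgamability directly.

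Putting the pieces together, the proof runs as follows. Suppose for contradiction that $T_{\conn}$ has a model completion $T^*$. Then $T^*$ has quantifier elimination, and a routine argument shows $T_{\conn}$ has the amalgamation property: given embeddings $C(X) \hookrightarrow C(Y_1)$ and $C(X) \hookrightarrow C(Y_2)$ into models of $T_{\conn}$, one embeds each $C(Y_i)$ into a model of $T^*$, and quantifier elimination forces the quantifier-free type of the generators of $C(Y_1)$ over $C(X)$ to be realized compatibly alongside those of $C(Y_2)$ in a common model, yielding an amalgam. But by Hoehn's observation the class of $C(X)$, $X$ a continuum, fails amalgamation, a contradiction. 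Hence $T_{\conn}$ has no model completion. I would be careful to phrase the amalgamation-from-quantifier-elimination step correctly in continuous logic (working with approximate realizations of quantifier-free types and taking an appropriate ultraproduct or union of a chain to get the genuine common extension), since the metric nature of the logic means one argues up to arbitrarily small error before passing to a limit.
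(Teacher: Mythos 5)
Your overall strategy is exactly the paper's: assume a model completion exists, deduce that $T_{\conn}$ has the amalgamation property (the paper simply cites this as a standard fact; your sketch of how quantifier elimination yields amalgamation is fine, modulo the usual care with approximate realizations in continuous logic), and then contradict this with a failure of amalgamation in the class of continua. The problem is that you never actually produce the failure of amalgamation. You invoke ``Hoehn's observation'' as if it were an available black box, but that observation \emph{is} the content of the paper's proof; the introduction's remark is only a pointer to the argument given inside Proposition \ref{prop:ModelCompletion} itself. Saying you ``would look for the simplest possible witnesses, plausibly built from arcs or simple graphs'' identifies the right neighborhood but leaves the essential step undone, and this step is genuinely topological rather than formal.

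For the record, the witness is: take $X = Y = [0,1]$ and $Z = \mathbb{S}^1 \subseteq \mathbb{C}$, with $f(x) = e^{2\pi i x}$ and $g(y) = e^{\pi i (2y+1)}$, i.e.\ two arcs wrapping once around the circle but with parametrizations offset by a half turn. If a continuum $W$ with surjections $r\colon W \to X$ and $s\colon W \to Y$ satisfied $f \circ r = g \circ s$, then the closed sets $A = r^{-1}([0,1/2]) \cap s^{-1}([1/2,1])$ and $B = r^{-1}([1/2,1]) \cap s^{-1}([0,1/2])$ would cover $W$ (one checks this pointwise from the compatibility condition), while any $w \in A \cap B$ would have $r(w) = s(w) = 1/2$ and hence $f(r(w)) = e^{\pi i} \neq e^{2\pi i} = g(s(w))$, a contradiction; so $W = A \sqcup B$ is disconnected. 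Without this (or some equivalent concrete configuration, verified), your argument reduces the proposition to an unproved claim, so as written the proof is incomplete precisely at its load-bearing point.
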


\begin{proof}
It is a standard fact of model theory that if $T_{\conn}$ has a model-completion, then $T_{\conn}$ has the amalgamation property.  Stated in terms of continua, this means that whenever $X$, $Y$, and $Z$ are continua and $f:X\to Z$ and $g:Y\to Z$ are continuous surjections, there is a continuum $W$ and continuous surjections $r:W\to X$ and $s:W\to Y$ such that $f\circ r=g\circ s$.  We give an example, due to Logan Hoehn, to show that the class of continua does not enjoy this co-amalgamation property.

Let $X = Y = [0, 1]$, and let $Z$ be the circle $\mathbb{S}^1$, which, for convenience, we view as the subset of $\mathbb{C}$ consisting of complex numbers $e^{i\theta}$.  Let $f : X \to Z$ be $f(x) = e^{2\pi i x}$ and $g : Y \to Z$ be $g(y) = e^{\pi i (2y + 1)}$.  Suppose that $W, r, s$ complete the amalgamation, in the above sense.  Let $A = r^{-1}([0, 1/2]) \cap s^{-1}([1/2, 1])$ and $B = r^{-1}([1/2, 1]) \cap s^{-1}([0, 1/2])$, both of which are closed in $W$.  It is easy to see that $A \cup B = W$.  Now suppose that $w \in A \cap B$.  Then $r(w) = s(w) = 1/2$, so $f(r(w)) = e^{\pi i} \neq e^{2\pi i} = g(s(w))$, contradicting the assumption that $f \circ r = g \circ s$.  Therefore $A \cap B = \emptyset$, and so $W$ is disconnected, yielding a contradiction.
\end{proof}

\begin{rmk}
Although the class of continua does not satisfy the amalgamation property in general, there are some continua that can always be amalgamated over.  For example, in \cite{Kras}, Krasinkiewicz shows that one can always amalgamate over an arc.  It is a standard fact of model theory (see \cite{ecfactor} for the proof in general or \cite{bankston2} for a proof in the case of continua) that one can always amalgamate over existentially closed structures.  In particular, as a consequence of Corollary \ref{pseudoec}, we see that one can always amalgamate over the pseudoarc.
\end{rmk}

\begin{cor}
If $X$ is a nondegenerate continuum, then $C(X)$ does not have quantifier-elimination.
\end{cor}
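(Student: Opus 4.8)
The plan is to argue by contradiction, exploiting the observation (recalled just before Proposition~\ref{prop:ModelCompletion}) that quantifier elimination is precisely what upgrades a model companion to a model completion. So I would suppose toward a contradiction that $X$ is a nondegenerate continuum and that $C(X)$ admits quantifier elimination.

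First I would invoke the standard fact that, in continuous logic just as in the discrete setting, quantifier elimination implies model-completeness: if every formula is equivalent modulo $\Th(C(X))$ to a quantifier-free formula, then a fortiori every formula is equivalent to an existential one, which is exactly the definition of model-completeness. Thus $C(X)$ is model-complete. Since $X$ is nondegenerate and $C(X)\models T_{\conn}$ is model-complete, Lemma~\ref{prop:ModelCompanion} applies and tells us that $\Th(C(X))$ is the model companion of $T_{\conn}$.

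Now I would combine the two pieces of the hypothesis: $\Th(C(X))$ is the model companion of $T_{\conn}$, and it has quantifier elimination. By the definition recalled in the text, a model companion with quantifier elimination is a \emph{model completion}. Hence $T_{\conn}$ has a model completion, which directly contradicts Proposition~\ref{prop:ModelCompletion}. This contradiction shows that $C(X)$ cannot have quantifier elimination, as desired.

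The only step requiring any care at all is the first one, namely confirming that quantifier elimination entails model-completeness in the continuous setting; but this is entirely routine and parallels the classical case verbatim. All of the genuine content has already been extracted in Fact~\ref{kp} (which powers Lemma~\ref{prop:ModelCompanion} via the ``sandwiching'' of any $C(Y)$ into an elementarily equivalent copy of $C(X)$) and in the Hoehn co-amalgamation example underlying Proposition~\ref{prop:ModelCompletion}, so I do not anticipate any real obstacle in assembling the corollary.
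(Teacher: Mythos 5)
Your proposal is correct and follows essentially the same route as the paper: quantifier elimination gives model-completeness, Lemma \ref{prop:ModelCompanion} then identifies $\Th(C(X))$ as the model companion of $T_{\conn}$, and quantifier elimination upgrades this to a model completion, contradicting Proposition \ref{prop:ModelCompletion}. The paper's proof is just a compressed version of the same argument, leaving the ``QE implies model-completeness'' step implicit.
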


\begin{proof}
If $C(X)$ had quantifier-elimination, then by Lemma \ref{prop:ModelCompanion}, $\Th(C(X))$ is the model-companion, and hence model-completion, of $T_{\conn}$, contradicting Proposition \ref{prop:ModelCompletion}.
\end{proof}

\begin{rmk}
In \cite{EFKV}, the authors show that if $C(X)$ has quantifier-elimination, then either $X$ is connected or else $C(X)$ is elementarily equivalent to $\mathbb{C}$, $\mathbb{C}^2$, or $C(2^\n)$.  The authors also ask if the former case can occur; our observations show that it cannot.
\end{rmk}

It is natural to wonder which abelian C$^*$-algebras are model-complete.  We remark here that there is only one possible connected infinite-dimensional abelian C$^*$-algebra that could be model-complete:

\begin{prop}
The following are equivalent:
\begin{enumerate}
\item $C(\mathbb{P})$ is model-complete;
\item there is a nondegenerate continuum $X$ such that $C(X)$ is model-complete;
\item $T_{\conn}$ has a model-companion.
\end{enumerate}
If these conditions hold, then $\Th(C(\mathbb{P}))$ is the model companion of $T_{\conn}$ and $C(\mathbb{P})$ is the unique (up to elementary equivalence) connected abelian C$^*$-algebra that is model-complete.
\end{prop}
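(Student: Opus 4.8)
The plan is to prove the cycle of implications $(1)\Rightarrow(2)\Rightarrow(3)\Rightarrow(1)$ and then read off the two concluding assertions from Lemma \ref{prop:ModelCompanion} and the uniqueness of the model companion. The implication $(1)\Rightarrow(2)$ is immediate, since the pseudoarc $\mathbb{P}$ is a nondegenerate continuum, so $X=\mathbb{P}$ witnesses (2). For $(2)\Rightarrow(3)$ I would apply Lemma \ref{prop:ModelCompanion} directly: if $C(X)$ is model-complete with $X$ nondegenerate, then $\Th(C(X))$ \emph{is} the model companion of $T_{\conn}$, so in particular a model companion exists.

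The one implication requiring an external input is $(3)\Rightarrow(1)$. Suppose $T_{\conn}$ has a model companion $T^*$. Here I would invoke the standard fact, valid in continuous logic (see \cite{usvy}), that the models of $T^*$ are exactly the existentially closed models of $T_{\conn}$. By Corollary \ref{pseudoec}, $C(\mathbb{P})$ is an existentially closed model of $T_{\conn}$, whence $C(\mathbb{P})\models T^*$. Now $\Th(C(\mathbb{P}))$ is a complete theory containing $T^*$, and any theory containing a model-complete theory is itself model-complete: an embedding between two models of the larger theory is in particular an embedding between models of $T^*$, hence elementary. Thus $\Th(C(\mathbb{P}))$ is model-complete, that is, $C(\mathbb{P})$ is model-complete.

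Finally, assuming the three equivalent conditions hold: since $C(\mathbb{P})$ is then model-complete and $\mathbb{P}$ is nondegenerate, Lemma \ref{prop:ModelCompanion} shows that $\Th(C(\mathbb{P}))$ is the model companion of $T_{\conn}$. For the uniqueness statement, let $C(X)$ be any model-complete connected abelian C$^*$-algebra with $X$ nondegenerate (equivalently, with $C(X)$ infinite-dimensional; the finite-dimensional case $C(X)=\mathbb{C}$ of a one-point spectrum, which is also model-complete, is excluded here, as in the remark preceding the proposition). Lemma \ref{prop:ModelCompanion} again yields that $\Th(C(X))$ is the model companion of $T_{\conn}$, and since the model companion is unique up to logical equivalence, $\Th(C(X))=\Th(C(\mathbb{P}))$; that is, $C(X)\equiv C(\mathbb{P})$.

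I expect the only genuinely delicate point to be the identification $C(\mathbb{P})\models T^*$ used in $(3)\Rightarrow(1)$: it rests on the equivalence between models of the model companion and existentially closed models, which must be cited in its continuous-logic form, combined with Corollary \ref{pseudoec}. The remaining implications and both concluding assertions are formal consequences of Lemma \ref{prop:ModelCompanion} and the uniqueness of the model companion.
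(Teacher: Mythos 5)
Your proposal is correct and follows the same overall skeleton as the paper's proof: (1)$\Rightarrow$(2) is trivial, (2)$\Rightarrow$(3) is exactly Lemma \ref{prop:ModelCompanion}, and (3)$\Rightarrow$(1) rests on the same two inputs, namely that the models of the model companion $T^*$ are precisely the existentially closed models of $T_{\conn}$ and that Corollary \ref{pseudoec} then gives $C(\mathbb{P})\models T^*$. You diverge in two finishing steps. For (3)$\Rightarrow$(1), the paper uses Fact \ref{kp} to show that every model of $T^*$ is elementarily equivalent to $C(\mathbb{P})$, so that $T^*$ is \emph{complete} and equals $\Th(C(\mathbb{P}))$ outright, from which model-completeness of $C(\mathbb{P})$ is immediate; you instead invoke the general fact that any theory extending a model-complete theory is itself model-complete, which is slightly more economical but only yields $T^*=\Th(C(\mathbb{P}))$ afterwards by circling back through Lemma \ref{prop:ModelCompanion}. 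For the uniqueness assertion, the paper appeals to its earlier remark that Fact \ref{kp} plus a sandwiching argument allows at most one $\forall\exists$-axiomatizable theory of $C(X)$ for $X$ a nondegenerate continuum (model-completeness implying $\forall\exists$-axiomatizability), whereas you derive the same conclusion from the uniqueness of the model companion applied via Lemma \ref{prop:ModelCompanion}; both routes are valid, and yours has the small virtue of making explicit the restriction to nondegenerate $X$ (excluding $\mathbb{C}$), which the paper's phrasing leaves implicit.
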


\begin{proof}
The implication (1)$\Rightarrow$(2) is obvious and the implication (2)$\Rightarrow$(3) is the content of Lemma \ref{prop:ModelCompanion}.  For the direction (3)$\Rightarrow$(1), observe that if $T^*$ is the model-companion of $T$, then, since the models of $T^*$ are precisely the existentially closed models of $T_{\conn}$, by Theorem \ref{pseudoec} we have $C(\mathbb{P})\models T^*$.  If $C(X)\models T^*$, then by Fact \ref{kp}, there is $C(X')\equiv C(X)$ such that $C(\mathbb{P})$ embeds into $C(X')$; since $C(X')\models T^*$ and $T^*$ is model-complete, we see that the embedding of $C(\mathbb{P})$ into $C(X')$ is elementary.  In particular, $C(\mathbb{P})\equiv C(X')\equiv C(X)$.  It follows that $T^*$ is complete, whence $T^*=\Th(C(\mathbb{P}))$ and $C(\mathbb{P})$ is model-complete.  We have already remarked, as a consequence of Fact \ref{kp}, that there is at most one connected abelian C$^*$-algebra that is $\forall\exists$-axiomatizable; since model-completeness implies $\forall\exists$-axiomatizability, the proof is complete.
\end{proof}

\appendix
\section*{Appendix: Bridging the gaps between terminologies}

Earlier work on the model theory of continua worked directly in the category of compacta by considering dual versions of concepts from ordinary model theory.  In particular, there is a notion of \emph{ultracoproduct} of a family of compact spaces.  The key fact is that if $(X_i\mid i\in I)$ is a family of compact spaces, $\u$ is an ultrafilter on $I$, and $\sum_\u X_i$ denotes the ultracoproduct of the $X_i$'s with respect to the ultrafilter $\u$, then we have a canonical isomorphism
$$C\left(\sum_\u X_i\right)\cong \prod_\u C(X_i),$$ where the right-hand side of the above display is the usual ultraproduct of C$^*$-algebras.

In order to explain the connection between Bankston's notion of co-existential closedness of compacta and the usual notion of existential closedness of abelian C$^*$-algebras, it will be useful to explain the ultraco\emph{power} construction.  Towards this end, recall that for any set $I$, $\beta I$ denotes the set of ultrafilters on $I$.  If $I'$ is another set and $f\colon I\to I'$ is any function, we obtain an induced map $\beta f\colon\beta I\to \beta I'$ by declaring, for $\u\in \beta I$ and $S\subseteq I'$, that $S\in \beta f(\u)$ if and only if $f^{-1}(S)\in \u$.

Now suppose that $X$ is a compact space, $I$ is an infinite set, and $\u\in \beta I$.  Let $p\colon X\times I\to X$ and $q\colon X\times I\to I$ denote the projections onto the first and second components, respectively.  The ultracopower $\sum_{\u}X$ is then defined to be $(\beta q)^{-1}(\{\u\})$.  The map $p_{X,\u}:=\beta p|\sum_{\u}X$ is a continuous surjection of $\sum_{\u}X$ onto $X$ that is dual to the usual diagonal embedding, that is, the induced map $C(X)\hookrightarrow C(\sum_{\u}X)\cong C(X)^\u$ is the usual diagonal embedding $\Delta_{C(X)}$ of $C(X)$ into $C(X)^\u$.

Inspired by the Keisler-Shelah theorem of model theory, Bankston defines two compacta to be \emph{co-elementarily equivalent} if they have homeomorphic ultracopowers.  It follows from the relationship between ultrapowers of C${}^*$-algebras and ultracopowers of spaces mentioned above that compacta $X$ and $X'$ are co-elementarily equivalent if and only if $C(X)$ and $C(X')$ are elementarily equivalent.  Before the advent of continuous model theory, the literature on the model theory of continua connects with classical model theory via lattices that are bases for the collection of closed subsets of continua.   In particular, if $L$ and $L'$ are lattices which are bases for the closed subsets of $X$ and $X'$ respectively and if $L$ and $L'$ are elementarily equivalent in the sense of ordinary first-order logic, then $X$ and $X'$ are co-elementarily equivalent (see \cite{bankston8}).  This observation allows for the translation of \cite[Proposition 3.1]{hart}, which is stated in the language of bases of closed sets, to the statement of Fact \ref{kp} above.

Bankston defines a continuous surjection $f\colon X\to Y$ between compacta to be \emph{co-existential} if there is an ultracopower $\sum_{\u}Y$ of $Y$ and a continuous surjection $g\colon \sum_{\u}Y\to X$ such that $f\circ g=p_{Y,\u}$.  

Suppose that $f\colon X\to Y$ is co-existential.  By considering the induced maps on the C$^*$-algebras, we get embeddings $f^\#\colon C(Y)\hookrightarrow C(X)$ and $g^\#\colon C(X)\hookrightarrow C(Y)^\u$ for which $(f\circ g)^\#=g^\#\circ f^\#=\Delta_{C(Y)}$.  Since $\Delta$ is elementary, it follows that $f^\#$ is an existential embedding.  

Conversely, suppose that $f\colon X\to Y$ is a continuous surjection for which $f^\#\colon C(Y)\hookrightarrow C(X)$ is existential.  Then for an ultrafilter $\u$ that makes $C(Y)^\u$ sufficiently saturated, we can find an embedding $C(X)\hookrightarrow C(Y)^\u$ that restricts to the diagonal embedding of $C(Y)$ into $C(Y)^\u$.  If one lets $g\colon \sum_{\u}Y\to X$ be the continuous surjection that induces the aforementioned embedding of $C(X)$ into $C(Y)^\u$, we have that $g$ witnesses that $f$ is co-existential.  We have thus proven:

\begin{lemma*}
A continuous surjection $f:X\to Y$ is co-existential if and only if the induced map $f^\#:C(Y)\hookrightarrow C(X)$ is existential.
\end{lemma*}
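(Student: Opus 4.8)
The plan is to translate both directions of the biconditional through the contravariant Gelfand dictionary, leaning on a single model-theoretic input: the standard ultrapower characterization of existential embeddings, which says that an embedding $h\colon A\hookrightarrow B$ is existential if and only if there is an ultrafilter $\u$ and an embedding $k\colon B\hookrightarrow A^\u$ with $k\circ h=\Delta_A$. The two dictionary facts I will use are both recorded in the discussion above: first, that a unital injective $*$-homomorphism $C(W)\hookrightarrow C(Z)$ of commutative C$^*$-algebras is precisely the dual of a continuous surjection $Z\to W$ (Gelfand duality); and second, the canonical identification $C(\sum_{\u}Y)\cong C(Y)^\u$ under which the ultracopower projection $p_{Y,\u}$ is dual to the diagonal embedding $\Delta_{C(Y)}$.

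For the forward direction, suppose $f$ is co-existential, so there is a continuous surjection $g\colon\sum_{\u}Y\to X$ with $f\circ g=p_{Y,\u}$. Applying $C(-)$ together with the identification $C(\sum_{\u}Y)\cong C(Y)^\u$, I obtain embeddings $f^\#\colon C(Y)\hookrightarrow C(X)$ and $g^\#\colon C(X)\hookrightarrow C(Y)^\u$ whose composite $g^\#\circ f^\#=(f\circ g)^\#=(p_{Y,\u})^\#$ is exactly $\Delta_{C(Y)}$. Thus $g^\#$ is an embedding of $C(X)$ into an ultrapower of $C(Y)$ that factors the diagonal through $f^\#$, and the ``if'' half of the characterization immediately yields that $f^\#$ is existential.

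For the backward direction, suppose $f^\#\colon C(Y)\hookrightarrow C(X)$ is existential. The ``only if'' half of the characterization, applied to a sufficiently saturated ultrapower $C(Y)^\u$, produces an embedding $k\colon C(X)\hookrightarrow C(Y)^\u$ with $k\circ f^\#=\Delta_{C(Y)}$. Transporting $k$ across $C(Y)^\u\cong C(\sum_{\u}Y)$ and dualizing via Gelfand, I get a continuous surjection $g\colon\sum_{\u}Y\to X$; the relation $k\circ f^\#=\Delta_{C(Y)}$ then dualizes to $f\circ g=p_{Y,\u}$, so $g$ witnesses that $f$ is co-existential.

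The main obstacle is the backward direction, and specifically the ``only if'' half of the existential-embedding characterization, where saturation is essential: one must choose the ultrafilter $\u$ so that $C(Y)^\u$ is saturated enough (relative to the density character of $C(X)$) to realize, over the image $f^\#(C(Y))$, the entire quantifier-free type of $C(X)$ inside $C(Y)^\u$, thereby producing an embedding $k$ that factors the diagonal. When $X$ is metrizable this is routine, since a countably incomplete ultrafilter already gives $\aleph_1$-saturation and $C(X)$ is separable; in the general compact case one simply takes $\u$ large enough. The remaining steps are bookkeeping: checking that the Gelfand dictionary is genuinely functorial and contravariant, that injectivity of the $*$-homomorphisms corresponds to surjectivity of the dual maps, and that the isomorphism $C(\sum_{\u}Y)\cong C(Y)^\u$ matches $p_{Y,\u}$ with $\Delta_{C(Y)}$, all of which are recorded in the discussion preceding the statement.
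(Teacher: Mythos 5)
Your proposal is correct and follows essentially the same route as the paper: in the forward direction you extract $g^\#\circ f^\#=\Delta_{C(Y)}$ from the dualized factorization and invoke the standard ultrapower criterion for existentiality (the paper phrases this as ``since $\Delta$ is elementary''), and in the backward direction you use a sufficiently saturated ultrapower to produce an embedding $C(X)\hookrightarrow C(Y)^\u$ over $C(Y)$ and then dualize back to a continuous surjection $\sum_{\u}Y\to X$. No substantive differences.
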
  

Bankston defines a compactum (resp. continuum) $Y$ to be \emph{co-existentially closed} if every continuous surjection $X\to Y$ with $X$ a compactum (resp. continuum) is co-existential; equivalently, $C(X)$ is an existentially closed abelian C$^*$-algebra (resp. existentially closed model of $T_{\conn}$).

In \cite{EV}, the first and third author of this paper proved that $C(2^\n)$ has quantifier elimination.  We take the opportunity here to point out that one can deduce this fact from the literature.  In \cite{bankston1}, Bankston proves that the co-existentially closed compacta are precisely the totally disconnected spaces without isolated points.  It follows that the existentially closed abelian C$^*$-algebras are precisely the models of $\Th(C(2^\n))$, whence $\Th(C(2^\n))$ is the model-companion of the theory of abelian C$^*$-algebras.  Since the theory of abelian C$^*$-algebras has the amalgamation property (by the existence of fiber products of compacta), the model companion is a model completion, whence $\Th(C(2^\n))$ has quantifier-elimination.

\bibliographystyle{amsplain}
\bibliography{pseudoarccoec}
\end{document}